\numberwithin{equation}{section}
\definecolor{goetheblau}{cmyk}{1.00 0.20 00 0.40}
\definecolor{blue}{cmyk}{1.00 0.20 00 0.40}
\definecolor{hellgrau}{cmyk}{0.04 0.04 0.05 0.02}
\definecolor{sandgrau}{cmyk}{0.12 0.09 0.13 0}
\definecolor{dunkelgrau}{cmyk}{0.25 0.25 0.30 0.75}
\definecolor{purple}{cmyk}{0.08 1.00 0.30 0.36}
\definecolor{emorot}{cmyk}{0.04 1.00 0.80 0.07}
\definecolor{red}{cmyk}{0.04 1.00 0.80 0.07}
\definecolor{senfgelb}{cmyk}{0.01 0.25 1.00 0.05}
\definecolor{gruen}{cmyk}{0.62 0.40 0.87 0.09}
\definecolor{magenta}{cmyk}{0.08 0.86 0.12 0.12}
\definecolor{orange}{cmyk}{0 0.70 1.00 0.04}
\definecolor{sonnengelb}{cmyk}{0 0.12 0.95 0}
\definecolor{hellesgruen}{cmyk}{0.40 0.17 0.81 0.07}
\definecolor{lichtblau}{cmyk}{0.80 00 0.06 0.04}
\definecolor{bluediff}{rgb}{0.0, 0.0, 1.0}
\definecolor{reddiff}{rgb}{1.0 0 0.0}
\newtheorem{theorem}{Theorem}[section]
\newtheorem{corollary}[theorem]{Corollary}
\newtheorem{lemma}[theorem]{Lemma}
\newtheorem{remark}[theorem]{Remark}
\newcommand{\N}{\mathbb{N}}
\newcommand{\R}{\mathbb{R}}
\DeclareMathOperator{\EW}{\mathbf{E}}
\DeclareMathOperator{\WS}{\mathbf{P}}
\pgfplotsset{compat=1.18}
\title[Ancestral reproductive bias under various sampling schemes]{Ancestral reproductive bias in continuous-time branching trees under various sampling schemes}
\author[J.~L. Igelbrink]{Jan Lukas Igelbrink}
\address{Jan Lukas Igelbrink  \\ Institut f\"ur Mathematik, 
  Johannes Gutenberg-Universit\"at Mainz and Goethe-Universit\"at Frankfurt, Germany.}
\email{igelbrin@math.uni-frankfurt.de}
\author[J. Ischebeck]{Jasper Ischebeck}
\address{Jasper Ischebeck, Institut f\"ur Mathematik, Goethe-Universit\"at Frankfurt, Germany.}
\email{ischebeck@math.uni-frankfurt.de}
\thanks{We thank Anton Wakolbinger for bringing the work \cite{gworg} to our attention. We are grateful to him and also to Matthias Birkner, G\"otz Kersting and Marius Schmidt for stimulating discussions and valuable hints. A substantial part of this work was done during the 2023 seminar week of the Frankfurt probability group in Haus Bergkranz.}
\subjclass[2020]{Primary 60J80; secondary 60K05, 92D10}
\keywords{branching processes,
spines,
reproductive bias,
inspection paradox,sampling schemes}
\begin{document}
\begin{abstract}
  Cheek and Johnston \cite{gworg} consider a continuous-time (Bienaymé-)Galton-Watson tree conditioned on being alive at time $T$. They study the reproduction events along the  ancestral lineage of an individual randomly sampled from all those alive at time $T$. We give a short proof of an extension of their main results \cite[Theorems~2.3 and 2.4]{gworg} to the more general case
of Bellman-Harris processes. Our proof also sheds light onto the probabilistic structure of the rate of the reproduction events. A similar method will be applied to explain (i) the different ancestral reproduction bias appearing in work by Geiger \cite{geiger} and (ii) the fact that the sampling rule considered by Chauvin, Rouault and Wakolbinger in \cite[Theorem 1] {CW} leads to a time homogeneous process along the ancestral lineage. 
\end{abstract}
\maketitle
\noindent
\allowdisplaybreaks
\section{Introduction}\label{sec:introduction}

Consider a continuous-time branching process with $N_t$ individuals alive at time $t$, started with one individual at time $0$. At the end of its lifetime, 
an individual is replaced by a random number of independent offspring with distribution $\left(p_k\right)_{k\geq 0}$. When lifetimes of the individuals are i.i.d.\ with an arbitrary distribution $\mu$ on~$\R_+$, the resulting process is called a \textit{Bellman-Harris} process \cite{BH}. In the special case of exponentially distributed lifetimes, 
this process is a continuous-time(Bienaym\'e-) Galton-Watson process, which is also called \emph{one-dimensional continuous-time Markov branching process}, see \cite[Chapter~3]{AN}.
For those processes, Cheek and Johnston \cite{gworg} study the process of reproduction times and family sizes along the ancestral lineage of an individual sampled from all those alive at a given time $T>0$, conditioned on the event $\{N_T>0\}$. We give a short and conceptual probabilistic proof of the main results of \cite{gworg} in the more general Bellman-Harris setting. The core idea of this proof is as follows:

On the event $\left\{ N_T > 0 \right\}$, we assign to the individuals alive at time $T$ independent random variables, which will be called markers, uniformly distributed on $[0,1]$. Then the individual whose marker is largest  constitutes a uniformly distributed random pick from all the individuals alive at time $T$. As we will see, the argument~$s$ of the generating functions that  appear in the analytic arguments of \cite{gworg} corresponds to  the realisation of the largest marker. Sections~\ref{sec:mainresults}~to~\ref{proof2} will be devoted to formulating and proving \Cref{th:1-general}.

Relating to work of Chauvin, Rouault and Wakolbinger \cite{CW}, in Section \ref{secCRW} we will consider the case of potentially dependent but identically and atomless distributed markers and conditioning on one marker taking the prescribed value $s$. In  contrast to the above, in this case one does not observe a time-inhomogeneity along the sampled ancestral lineage. 

In Section \ref{SecGeiger} we will consider a planar embedding of the Bellman-Harris tree conditioned to survive up to time $T$, and analyse the leftmost ancestral lineage among those surviving until time~$T$. Here we follow Geiger~\cite{geiger}, who gave a representation of discrete-time Galton-Watson processes conditioned to survive up to a given number of generations. With this sampling rule we observe a time-inhomogeneity of ancestral reproduction events that is different from the one in \cite{gworg}. 

In Section  \ref{biopersp} we briefly resume the discussion from \cite{gworg} on a possible relation between the ancestral rate bias and the rate of mutations per cell division in embryogenesis, and illustrate the various sampling schemes from a more biological perspective.
\section{Sampling an ancestral line at random}\label{sec:mainresults}
\begin{wrapfigure}{r}{0.5\textwidth}
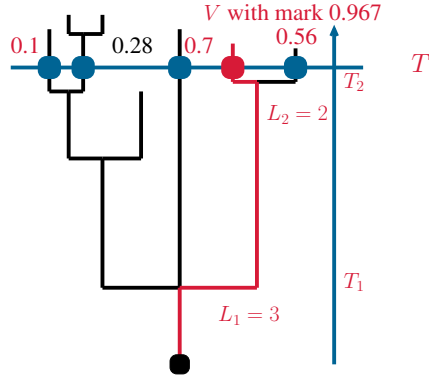

  \centering
   \scalebox{0.4}{ {\pgfkeys{/pgf/fpu/.try=false}%
\ifx\XFigwidth\undefined\dimen1=0pt\else\dimen1\XFigwidth\fi
\divide\dimen1 by 6546
\ifx\XFigheight\undefined\dimen3=0pt\else\dimen3\XFigheight\fi
\divide\dimen3 by 6429
\ifdim\dimen1=0pt\ifdim\dimen3=0pt\dimen1=3946sp\dimen3\dimen1
  \else\dimen1\dimen3\fi\else\ifdim\dimen3=0pt\dimen3\dimen1\fi\fi
\tikzpicture[x=+\dimen1, y=+\dimen3]
{\ifx\XFigu\undefined\catcode`\@11
\def\temp{\alloc@1\dimen\dimendef\insc@unt}\temp\XFigu\catcode`\@12\fi}
\XFigu3946sp
\ifdim\XFigu<0pt\XFigu-\XFigu\fi
\pgfdeclarearrow{
  name = xfiga0,
  parameters = {
    \the\pgfarrowlinewidth \the\pgfarrowlength \the\pgfarrowwidth},
  defaults = {
	  line width=+7.5\XFigu, length=+120\XFigu, width=+60\XFigu},
  setup code = {
    \dimen7 2.15\pgfarrowlength\pgfmathveclen{\the\dimen7}{\the\pgfarrowwidth}
    \dimen7 2\pgfarrowwidth\pgfmathdivide{\pgfmathresult}{\the\dimen7}
    \dimen7 \pgfmathresult\pgfarrowlinewidth
    \pgfarrowssettipend{+\dimen7}
    \pgfarrowssetbackend{+-\pgfarrowlength}
    \dimen9 -0.5\pgfarrowlinewidth
    \pgfarrowssetvisualbackend{+\dimen9}
    \pgfarrowssetlineend{+-0.5\pgfarrowlinewidth}
    \pgfarrowshullpoint{+\dimen7}{+0pt}
    \pgfarrowsupperhullpoint{+-\pgfarrowlength}{+0.5\pgfarrowwidth}
    \pgfarrowssavethe\pgfarrowlinewidth
    \pgfarrowssavethe\pgfarrowlength
    \pgfarrowssavethe\pgfarrowwidth
  },
  drawing code = {\pgfsetdash{}{+0pt}
    \ifdim\pgfarrowlinewidth=\pgflinewidth\else\pgfsetlinewidth{+\pgfarrowlinewidth}\fi
    \pgfpathmoveto{\pgfqpoint{-\pgfarrowlength}{0.5\pgfarrowwidth}}
    \pgfpathlineto{\pgfqpoint{0pt}{0pt}}
    \pgfpathlineto{\pgfqpoint{-\pgfarrowlength}{-0.5\pgfarrowwidth}}
    \pgfusepathqstroke
  }
}
\clip(3321,-6387) rectangle (9867,42);
\tikzset{inner sep=+0pt, outer sep=+0pt}
\pgfsetfillcolor{blue}
\pgftext[base,left,at=\pgfqpointxy{4950}{-750}] {\fontsize{22}{26.4}\usefont{T1}{ptm}{m}{n}0.28}
\pgfsetlinewidth{+7.5\XFigu}
\pgfsetstrokecolor{white}
\pgfsetarrows{[line width=7.5\XFigu]}
\pgfsetarrowsend{xfiga0}
\draw (3525,-150)--(3525,-6300);
\draw (3525,-6225)--(9150,-6225);
\pgfsetlinewidth{+60\XFigu}
\pgfsetstrokecolor{black}
\pgfsetarrowsend{}
\draw (6000,-4425)--(4800,-4425);
\draw (4800,-4425)--(4800,-2400);
\draw (6000,-4425)--(6000,-375);
\pgfsetstrokecolor{blue}
\pgfsetarrows{[line width=45\XFigu, width=105\XFigu]}
\pgfsetarrowsend{xfiga0}
\draw (8400,-5625)--(8400,-300);
\pgfsetstrokecolor{black}
\pgfsetarrowsend{}
\draw (4800,-2400)--(4275,-2400);
\draw (4800,-2400)--(5400,-2400);
\draw (5400,-2400)--(5400,-1350);
\draw (4275,-2400)--(4275,-1350);
\draw (4275,-1350)--(4500,-1350);
\draw (4500,-1350)--(4500,-450);
\draw (4275,-1350)--(3975,-1350);
\draw (3975,-1350)--(3975,-375);
\draw (4500,-450)--(4800,-450);
\draw (4500,-450)--(4275,-450);
\draw (4275,-450)--(4275,-150);
\pgfsetfillcolor{white}
\filldraw (4800,-450)--(4800,-375)--(4800,-150);
\pgfsetstrokecolor{blue}
\draw (3375,-975)--(8850,-975);
\pgfsetcolor{blue}
\filldraw (4125,-1125) [rounded corners=+105\XFigu] rectangle (3825,-825);
\filldraw (4650,-1125) [rounded corners=+105\XFigu] rectangle (4350,-825);
\pgfsetstrokecolor{black}
\draw (7200,-1200)--(7800,-1200);
\draw (7800,-1200)--(7800,-675);
\pgfsetcolor{blue} 
\filldraw (7950,-1125) [rounded corners=+105\XFigu] rectangle (7650,-825);
\filldraw (6150,-1125) [rounded corners=+105\XFigu] rectangle (5850,-825);
\pgfsetcolor{red}
\filldraw (6975,-1125) [rounded corners=+105\XFigu] rectangle (6675,-825);
\draw (6825,-1200)--(6825,-600);
\draw (7200,-1200)--(6825,-1200);
\draw (7200,-4425)--(7200,-1200);
\draw (6000,-4425)--(7200,-4425);
\draw (6000,-5625)--(6000,-4425);
\pgfsetlinewidth{+7.5\XFigu}
\pgfsetstrokecolor{white}
\pgfsetarrows{[line width=7.5\XFigu, width=60\XFigu]}
\pgfsetarrowsend{xfiga0}
\draw (9825,-6375)--(9825,-450);
\pgftext[base,left,at=\pgfqpointxy{8550}{-4425}] {\fontsize{22}{26.4}\usefont{T1}{ptm}{m}{n}$T_1$}
\pgftext[base,left,at=\pgfqpointxy{8550}{-1290}] {\fontsize{22}{26.4}\usefont{T1}{ptm}{m}{n}$T_2$}
\pgfsetfillcolor{blue}
\pgftext[base,left,at=\pgfqpointxy{6075}{-750}] {\fontsize{22}{26.4}\usefont{T1}{ptm}{m}{n}0.7}
\pgftext[base,left,at=\pgfqpointxy{7500}{-600}] {\fontsize{22}{26.4}\usefont{T1}{ptm}{m}{n}0.56}
\pgfsetfillcolor{red}
\pgftext[base,left,at=\pgfqpointxy{7350}{-1800}] {\fontsize{22}{26.4}\usefont{T1}{ptm}{m}{n}$L_2=2$}
\pgftext[base,left,at=\pgfqpointxy{6600}{-4950}] {\fontsize{22}{26.4}\usefont{T1}{ptm}{m}{n}$L_1=3$}
\pgftext[base,left,at=\pgfqpointxy{6375}{-225}] {\fontsize{22}{26.4}\usefont{T1}{ptm}{m}{n}$V$ with mark 0.967}
\pgfsetfillcolor{blue}
\pgftext[base,left,at=\pgfqpointxy{9600}{-1050}] {\fontsize{24}{28.8}\usefont{T1}{ptm}{m}{n}$T$}
\pgftext[base,left,at=\pgfqpointxy{3375}{-750}] {\fontsize{22}{26.4}\usefont{T1}{ptm}{m}{n}0.1}
\pgfsetcolor{black}
\filldraw (6150,-5775) [rounded corners=+105\XFigu] rectangle (5850,-5475);
\endtikzpicture}
  \caption{An example for a realisation of the random variables $S, L_1, L_2, T_1, T_2$ in the sampling regime described in Section~\ref{sec:mainresults}.}\label{fig:1}
\end{wrapfigure}
Recall that to each individual at time $T$, we have associated a uniform marker in $[0,1]$. 
On the event $\{N_T > 0\}$, let the individual $V$ be sampled as described in the Introduction, and let $S$ be its mark. We define the process~$(N_t)_{t\ge 0}$ to be right continuous with left limits. As a consequence, if $T_1$ is the lifetime of the root individual, then $N_{T_1}$ has distribution $\left(p_k\right)_{k\geq 0}$.  Let $J$ be the random number of reproduction events and   ${0<T_1 < T_2 < \cdots < T_J\le T}$ be the random times of reproduction events along the ancestral lineage of $V$. Let $L_1, \ldots, L_J$  be the offspring sizes in these reproduction events and let $0<\tau_1 < \tau_2 < \cdots$ be the random arrival times in a renewal process with interarrival time distribution $\mu$. See Figure~\ref{fig:1} for a sample realisation.\\
Denote by $\WS$ and $\EW$ the probability measure and expectation for $N_0=1$.
  \begin{theorem}\label{th:1-general} For $j \ge 0$, $0  < t_1<\ldots < t_j \le T \in \R$ and $\ell_1,\ldots, \ell_j \in \N$ we have
  \begin{eqnarray}\begin{aligned} \label{fullformula}
  &\WS\left(N_T>0, J=j, \, T_1\in \dif t_1, \ldots T_j\in \dif t_j,\, L_1=\ell_1, \ldots, L_j=\ell_j,\, S\in \dif s\right) \\
  &= \WS \left(\tau_1 \in \dif t_1, \ldots, \tau_j \in \dif t_j, \tau_{j+1} > T\right)    \prod_{i=1}^j   \left( \ell_i p_{\ell_i} \EW\left[ s^{ N_{T-t_i} } \right]^{\ell_i -1 }  \right) \dif{s}.
  \end{aligned}
  \end{eqnarray}
\end{theorem}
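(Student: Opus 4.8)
\emph{Proof sketch (proposal).} The plan is to evaluate the left-hand side of \eqref{fullformula} directly, by summing over the possible identities of the sampled individual $V$ in the Ulam--Harris labelling of the tree, and by using one elementary fact about the markers. Write $\mathcal T$ for the random tree, which records the lifetime and offspring number of every individual. Conditionally on $\mathcal T$ (on $\{N_T>0\}$), the $N_T$ individuals alive at time $T$ carry i.i.d.\ uniform marks and $V$ is the carrier of the largest one, with $S$ that mark. Hence, for any fixed label $v$ that happens to be alive at time $T$ in $\mathcal T$,
\[
  \WS\bigl(V=v,\ S\in\dif s\ \big|\ \mathcal T\bigr)\;=\;s^{\,N_T-1}\,\dif s,
\]
since this is precisely the probability that $v$'s mark falls in $\dif s$ while the remaining $N_T-1$ marks lie below it; summing over the $N_T$ individuals alive at $T$ returns $N_T s^{N_T-1}\dif s$, as it must.

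First, I would rewrite the left-hand side as $\sum_v \EW\bigl[\mathbf 1\{v\text{ alive at }T\}\,\mathbf 1\{\mathcal A_v\}\,s^{N_T-1}\bigr]\dif s$, where the sum runs over all Ulam--Harris labels and $\mathcal A_v=\{J(v)=j,\ T_1(v)\in\dif t_1,\dots,T_j(v)\in\dif t_j,\ L_1(v)=\ell_1,\dots,L_j(v)=\ell_j\}$ collects the data along the ancestral lineage of $v$. On $\{v\text{ alive at }T\}$ one automatically has that $J(v)$ equals the generation $|v|$ of $v$, so only labels $v$ of length $j$ contribute, and the events $L_i(v)=\ell_i$ concern exactly the offspring numbers of the $j$ strict ancestors of $v$.

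Second, I would fix such a label $v=(v_1,\dots,v_j)$ and compute $\EW\bigl[\mathbf 1\{v\text{ alive at }T\}\,\mathbf 1\{\mathcal A_v\}\,s^{N_T-1}\bigr]$. Writing $t_0:=0$, the event $\mathcal A_v\cap\{v\text{ alive at }T\}$ requires that the $i$-th ancestor of $v$ (the root being the $0$-th) have lifetime $t_{i+1}-t_i$ and $\ell_{i+1}$ offspring for $0\le i<j$ — forcing $v_{i+1}\le\ell_{i+1}$ — and that $v$ itself have lifetime exceeding $T-t_j$. As lifetimes and offspring numbers of distinct individuals are independent, the probability of this configuration is $\mathbf 1\{v_i\le\ell_i\ \forall i\}\,\mu(\dif t_1)\,\mu(\dif(t_2-t_1))\cdots\mu(\dif(t_j-t_{j-1}))\,\mu((T-t_j,\infty))\prod_{i=1}^j p_{\ell_i}$. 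Moreover, by the branching property, conditionally on this configuration the $N_T-1$ individuals alive at $T$ other than $v$ are exactly the time-$T$ populations of the $\sum_{i=1}^j(\ell_i-1)$ independent Bellman--Harris subtrees founded by the off-lineage siblings of $v$'s ancestors, the $\ell_i-1$ subtrees created at the $i$-th reproduction event each having population distributed as $N_{T-t_i}$ and independent of the lineage data; hence the conditional expectation of $s^{N_T-1}$ equals $\prod_{i=1}^j\EW\bigl[s^{N_{T-t_i}}\bigr]^{\ell_i-1}$.

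Finally, since there are exactly $\prod_{i=1}^j\ell_i$ labels $v$ of length $j$ with $v_i\le\ell_i$ for all $i$, summing the previous step over $v$ yields
\[
  \Bigl(\prod_{i=1}^j\ell_i p_{\ell_i}\Bigr)\mu(\dif t_1)\cdots\mu(\dif(t_j-t_{j-1}))\,\mu((T-t_j,\infty))\prod_{i=1}^j\EW\bigl[s^{N_{T-t_i}}\bigr]^{\ell_i-1}\dif s,
\]
and it remains to recognise $\mu(\dif t_1)\cdots\mu(\dif(t_j-t_{j-1}))\,\mu((T-t_j,\infty))=\WS(\tau_1\in\dif t_1,\dots,\tau_j\in\dif t_j,\tau_{j+1}>T)$ and to regroup the product into $\prod_{i=1}^j\bigl(\ell_i p_{\ell_i}\EW[s^{N_{T-t_i}}]^{\ell_i-1}\bigr)$, which is exactly \eqref{fullformula}; the degenerate case $j=0$ (where $V$ is the root and $N_T=1$) is the direct check that both sides equal $\mu((T,\infty))\,\dif s$. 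I expect the main obstacle to be the rigorous justification of the second step: one must argue carefully, via the (strong) branching property and the i.i.d.\ lifetime assumption, that conditionally on the realisation of $v$'s ancestral lineage the off-lineage subtrees are genuine, mutually independent Bellman--Harris trees that are independent of the whole lineage, so that the generating-function factorisation is legitimate; and one should phrase the entire computation as an identity of measures in $(t_1,\dots,t_j,s)$ — keeping $\mu$ as a possibly atomic measure throughout — rather than as a formal manipulation of the infinitesimals $\dif t_i$.
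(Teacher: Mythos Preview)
Your argument is correct; it is a direct ``many-to-one'' computation via the Ulam--Harris labelling, whereas the paper proceeds by induction on $j$ (simultaneously over all time horizons $T$), peeling off the first branching event and invoking a separate lemma on maxima of grouped uniform markers (their Lemma~3.1 / Corollary~3.2). Concretely, the paper conditions on $\{T_1\in\dif t_1,\,L_1=\ell_1\}$, applies the identity
\[
\WS\!\left(\widetilde N_1+\cdots+\widetilde N_{\ell_1}>0,\ S^{(\ell_1)}\in\dif s\right)
=\ell_1\,\EW\!\left[s^{\widetilde N}\right]^{\ell_1-1}\WS\!\left(\widetilde N_1>0,\ S_1\in\dif s\right)
\]
with $\widetilde N\overset{d}{=}N_{T-t_1}$ to separate the factor $\ell_1 p_{\ell_1}\EW[s^{N_{T-t_1}}]^{\ell_1-1}$, and then recognises the remaining probability as the $(j{-}1)$-step statement at time horizon $T-t_1$. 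Your route replaces this recursion by the single observation $\WS(V=v,\,S\in\dif s\mid\mathcal T)=s^{N_T-1}\dif s$ and a global spine/off-spine factorisation; this is arguably more transparent for the present sampling scheme and yields the formula in one pass. The inductive approach, on the other hand, isolates the max-marker mechanism as a standalone lemma, which the paper then swaps for analogous lemmas to treat the other sampling schemes (conditioning on a marker value, leftmost surviving lineage) with essentially the same inductive skeleton---a modularity your direct computation does not immediately offer.
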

\begin{corollary}\label{coro}  When integrated over $s\in (0,1)$, \eqref{fullformula} reveals that 
the process\\$(T_1,L_1),\ldots, (T_J,L_J)$ of reproduction times and offspring sizes along the ancestral lineage of the uniformly chosen individual (conditioned on $\{N_T>0\}$)  is a mixture of  (what could be called)  ``biased compound renewal processes''. 
\end{corollary}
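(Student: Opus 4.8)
The plan is to read off \Cref{coro} from \eqref{fullformula} by integrating out the mark $s$. Since every factor on both sides of \eqref{fullformula} is nonnegative, Tonelli's theorem permits integrating over $s\in(0,1)$ and freely interchanging this integration with the summation over $j$ and $\ell_1,\dots,\ell_j$ and the integration over $t_1,\dots,t_j$. This yields, for every $j\ge0$,
\[
\WS\bigl(N_T>0,\,J=j,\,T_1\in\dif t_1,\dots,T_j\in\dif t_j,\,L_1=\ell_1,\dots,L_j=\ell_j\bigr)=\int_0^1\rho_s\,\dif s,
\]
where $\rho_s$ is the measure on finite marked point configurations $\{(t_i,\ell_i)\}_{i\le j}$ in $(0,T]\times\N$ given by the $s$-slice $\WS(\tau_1\in\dif t_1,\dots,\tau_j\in\dif t_j,\tau_{j+1}>T)\prod_{i=1}^j\ell_i p_{\ell_i}\EW[s^{N_{T-t_i}}]^{\ell_i-1}$ of the right-hand side of \eqref{fullformula}.

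Next I would interpret $\rho_s$. Its backbone $\WS(\tau_1\in\dif t_1,\dots,\tau_j\in\dif t_j,\,\tau_{j+1}>T)\prod_{i=1}^j p_{\ell_i}$ is exactly the law of the marked renewal process underlying a compound renewal process: arrival times $\tau_1<\tau_2<\cdots$ with interarrival law $\mu$, stopped at the first arrival beyond $T$, each arrival carrying an independent $(p_k)_{k\ge0}$-distributed offspring mark. The remaining factor $\prod_{i=1}^j\ell_i\,\EW[s^{N_{T-t_i}}]^{\ell_i-1}$ is a product of per-event weights, so $\rho_s$ is this backbone tilted event by event: the weight $\ell_i$ size-biases the offspring number at the $i$-th event, while $\EW[s^{N_{T-t_i}}]^{\ell_i-1}$ is the probability that among the $\ell_i-1$ offspring \emph{not} lying on the sampled lineage every time-$T$ descendant carries a marker below $s$. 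This tilted object is what I would call a ``biased compound renewal process''.

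Finally I would check normalisation and read off the mixture. Summing \eqref{fullformula} over all $j\ge0$ and $\ell_1,\dots,\ell_j$, integrating over $t_1,\dots,t_j$, and integrating over $s\in(0,1)$ recovers $\WS(N_T>0)$, because on $\{N_T>0\}$ the sampled individual $V$, its mark $S$ and the data $J,(T_i),(L_i)$ are always defined; hence $\int_0^1\rho_s(\text{all configurations})\,\dif s=\WS(N_T>0)$ and $s\mapsto\rho_s(\text{all configurations})/\WS(N_T>0)$ is a probability density on $(0,1)$ — by \eqref{fullformula} it is the conditional density of $S$ given $\{N_T>0\}$. Dividing the integrated identity by $\WS(N_T>0)$ then presents the conditional law of $(T_1,L_1),\dots,(T_J,L_J)$ given $\{N_T>0\}$ as $\int_0^1\widehat\rho_s(\cdot)\,\WS(S\in\dif s\mid N_T>0)$ with $\widehat\rho_s:=\rho_s/\rho_s(\text{all configurations})$, i.e.\ as a mixture over $s\in(0,1)$ of biased compound renewal processes.

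I do not expect a genuine obstacle: given \eqref{fullformula}, the content of \Cref{coro} is just the observation that the formula factorises over reproduction events on top of a renewal backbone. The only care needed is bookkeeping — fixing the state space of finite marked configurations in $(0,T]\times\N$ (with the empty configuration for $j=0$, where \eqref{fullformula} collapses to $\WS(\tau_1>T)\,\dif s$) and noting that the tilt is normalisable, which the total-mass computation above guarantees.
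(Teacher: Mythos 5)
Your proposal is correct and follows exactly the route the paper intends: the paper offers no separate proof of \Cref{coro}, treating it as immediate from integrating \eqref{fullformula} over $s$, and your elaboration (Tonelli, the event-by-event tilt of the renewal backbone, and the identification of the mixing measure as the conditional law of $S$ given $\{N_T>0\}$ with density $F_T'(s)/(1-F_T(0))$) matches the interpretation the paper gives at the end of Section~\ref{sec:mainresults}. Your reading of the factor $\EW[s^{N_{T-t_i}}]^{\ell_i-1}$ as the probability that all time-$T$ descendants of the $\ell_i-1$ off-lineage siblings carry markers below $s$ is precisely the probabilistic content of \Cref{Lemma1}.
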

When the lifetime distribution $\mu$ is the exponential distribution with parameter $r$, then $\tau_1, \tau_2,\ldots$ are the points of a rate $r$ Poisson point process. In this case \Cref{coro} together with \eqref{fullformula} becomes a reformulation of the statements of  \cite[Theorems~2.3 and 2.4]{gworg}, and at the same time reveals the probabilistic role of the mixing parameter $s$ in the mixture of biased compound Poisson processes that appear in the ``Cox process representation" of \cite{gworg}.

 Let us write (as in \cite{gworg})  $F_t(s) := \mathbf E[s^{N_t}]$, and abbreviate
\begin{equation}\label{ratebias}
B(t,T, \ell) := \frac 1{1-F_T(0)} \int_0^1 F_{T-t}(s)^{\ell-1}F'_T(s) \dif s.
\end{equation}
\cite[Theorem~2.4]{gworg} (as well as \Cref{th:1-general}) says  that the rate of size $\ell$ reproduction along the uniform ancestral lineage at time $t$ is  $$r\ell p_\ell\,  B(t,T,\ell).$$
This can be obtained from \Cref{coro} by noting that $S$ has density $$\frac{F_T'(s)}{1-F_t(0)}.$$
In this sense the factor $ B(t,T,\ell)$ can be interpreted as an  {\em (ancestral) rate bias}, on top of the classical term $r \ell p_\ell$. Indeed, the factor  $B(t,T,\ell)$ is absent  in trees that are biased with respect to their size at time~$T$. 
Galton-Watson trees of this kind have been investigated (also in the multitype case) by Georgii and Baake \cite[Section~4]{geba}; they are continuous-time analogues of the size-biased trees analysed by Lyons et al. \cite{lpp} and Kurtz et al. \cite{klpp}.

In the critical and supercritical case one can check that, for all fixed $u<T$  and $\ell \in \mathbb N$ one has the convergence  $B(T-u, T, \ell) \to 1$ as $T\to \infty$ because $S$ converges to 1 in probability. 
In the supercritical case this stabilisation along the  sampled ancestral lineage corresponds to the “retrospective viewpoint” that has been taken in \cite{geba} and, in the more general situation of Crump-Mode-Jagers processes, by Jagers and Nerman \cite{jn}.
The choice $\mu = \delta_1$ renders the case of discrete-time Galton-Watson processes, starting with one individual at time $0$ and with reproduction events at times $1,2,\ldots$. Then, with $T=n \in \N$, and $L_1, \ldots, L_{n}$ being the family sizes along the ancestral lineage of the sampled individual $V$, the formula  \eqref{fullformula} specialises to
  \begin{eqnarray}
  \WS\left(N_n>0, \, L_1=\ell_1, \ldots, L_{n}=\ell_{n},\, S\in \dif s\right) 
  =  \left( \prod_{i=1}^{n}   \ell_i p_{\ell_i} \EW\left[ s^{ N_{n-i} } \right]^{\ell_i -1 }  \right) \dif{s}.
  \end{eqnarray} 

\section{Maxima of i.i.d.\ random markers}\label{maxmarkers}
 As a preparation for the short probabilistic proof of \Cref{th:1-general} given in the next section, we recall  the following \mbox{well-know fact:}
  Denote by Unif$[0,1]$ the uniform distribution on the interval $[0,1]$. 
 For $\ell \in \N$, let $\widetilde S$ be the maximum of $\ell$ independent Unif$[0,1]$-distributed random variables $U_1, \ldots, U_\ell$. Then the density of $\widetilde S$ is
\begin{equation}\label{classic}
\WS\left(\widetilde S \in \dif s\right) = \ell s^{\ell-1} \dif s, \quad  0\le s \le 1.
\end{equation}
Indeed, because of exchangeability, 
$$\WS\left(\widetilde S\in \dif s\right) = \ell \WS\left(U_1\in \dif s\right)  \WS\left(U_2<s,\ldots, U_\ell<s\right),$$
which equals the r.h.s.\ of \eqref{classic}. 

The following  lemma specialises to \eqref{classic} when putting $\tilde N \equiv 1$.
 \begin{lemma}\label{Lemma1}
 Let $\widetilde N$ be an $\N_0$-valued random variable, and  $\widetilde N_1, \widetilde N_2, \ldots $ be i.i.d.~copies of  $\tilde N$. Given $\widetilde N_1, \widetilde N_2, \ldots $ let $U_{1,1}, \ldots U_{1,\widetilde N_1},  U_{2,1}, \ldots U_{2,\widetilde N_2}, \ldots$ be independent Unif\,$[0,1]$-distributed random variables, and write
\begin{eqnarray*}
S_k &:=& \max\left\{U_{k,1}, \ldots,  U_{k,\widetilde N_k}\right\}, \quad k=1,2,\ldots\\
S^{(\ell)}&:=& \max\left\{ S_1, \ldots,  S_\ell\right\},\quad \ell \in \N
\end{eqnarray*}
where we put $\max(\emptyset) := -\infty$.
 Then, for all $\ell \in \N$,  the density of $S^{(\ell)}$ is
\begin{equation}\label{gen1}
\WS\left(\widetilde N_1+\ldots+\widetilde N_\ell > 0, \, S^{(\ell)} \in \dif s\right) = \ell\, \EW\left[s^{\widetilde N}\right]^{\ell-1} \WS\left(\widetilde N_1 > 0, \,S_1 \in \dif s\right), \quad  0\le s \le 1.
\end{equation}
 \end{lemma}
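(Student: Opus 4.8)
The plan is to mimic the one-line argument given just above for \eqref{classic}, but now conditioning on the family sizes $\widetilde N_1, \ldots, \widetilde N_\ell$ and then averaging. First I would fix the realisations $\widetilde N_1 = n_1, \ldots, \widetilde N_\ell = n_\ell$ with $n_1 + \cdots + n_\ell > 0$, write $n := n_1 + \cdots + n_\ell$, and observe that conditionally the random variable $S^{(\ell)}$ is simply the maximum of $n$ independent $\mathrm{Unif}[0,1]$ variables, so by \eqref{classic} its conditional density is $n s^{n-1}\,\dif s$. The key manipulation is then to split this density across the $\ell$ blocks in a way that singles out block $1$: by exchangeability of the blocks (the $\widetilde N_k$ are i.i.d.\ and the $U$'s are i.i.d.\ given them),
\begin{equation*}
\WS\!\left(S^{(\ell)} \in \dif s \;\middle|\; \widetilde N_1 = n_1, \ldots, \widetilde N_\ell = n_\ell\right)
= \sum_{k=1}^{\ell} \WS\!\left(S_k \in \dif s,\ S_j < s\ \forall j\neq k \;\middle|\; \cdots\right),
\end{equation*}
and each term factorises as $\WS(S_k \in \dif s \mid \widetilde N_k = n_k)\prod_{j\neq k}\WS(S_j < s \mid \widetilde N_j = n_j)$, using conditional independence of the blocks. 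Since $\WS(S_j < s \mid \widetilde N_j = n_j) = s^{n_j}$ and the blocks are identically distributed, I would then take expectations over $\widetilde N_1, \ldots, \widetilde N_\ell$: the contribution of block $k$ becomes $\EW[\mathbf 1\{\widetilde N_1>0\},\ S_1 \in \dif s]\cdot \EW[s^{\widetilde N}]^{\ell-1}$ (here I relabel block $k$ as block $1$ by exchangeability, and each of the remaining $\ell-1$ blocks contributes a factor $\EW[s^{\widetilde N_j}] = \EW[s^{\widetilde N}]$, where the $n_j = 0$ case is harmless since $s^0 = 1$ and such a block just drops out of the maximum). Summing the $\ell$ identical terms yields the factor $\ell$ and hence exactly the right-hand side of \eqref{gen1}.

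One bookkeeping point deserves care: the event on the left of \eqref{gen1} is $\{\widetilde N_1 + \cdots + \widetilde N_\ell > 0\}$, whereas after singling out block $1$ I get the event $\{\widetilde N_1 > 0\}$ on the right. These match because on $\{\widetilde N_1 = 0\}$ we have $S_1 = -\infty$, so $\{S_1 \in \dif s\} \subseteq \{\widetilde N_1 > 0\}$ for $s \in [0,1]$ automatically, and conversely the maximum $S^{(\ell)}$ lands in $[0,1]$ precisely on $\{\widetilde N_1 + \cdots + \widetilde N_\ell > 0\}$; the decomposition over which block achieves the maximum (with the convention $\max(\emptyset) = -\infty$) then partitions this event, which is what makes the sum over $k$ legitimate. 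The main (mild) obstacle is therefore not any computation but getting this indicator/event bookkeeping exactly right — in particular making sure that blocks with $\widetilde N_k = 0$ are correctly absorbed into the $\EW[s^{\widetilde N}]^{\ell-1}$ factor and never spuriously counted as achieving the maximum — after which the result follows by the same exchangeability argument that gave \eqref{classic}.
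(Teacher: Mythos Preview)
Your proposal is correct and follows essentially the same exchangeability argument as the paper: decompose the event $\{S^{(\ell)}\in\dif s\}$ according to which block achieves the maximum, use that the remaining blocks each contribute $\WS(S_j<s)=\EW[s^{\widetilde N}]$, and collect the $\ell$ identical terms. The only difference is that you first condition on $(\widetilde N_1,\ldots,\widetilde N_\ell)$ and then average, whereas the paper works directly with the i.i.d.\ random variables $S_1,\ldots,S_\ell$ (noting $\WS(S_1<s)=\EW[\WS(S_1<s\mid\widetilde N_1)]=\EW[s^{\widetilde N}]$), which makes the conditioning layer unnecessary and the write-up a couple of lines shorter.
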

\begin{proof}Again because of exchangeability, the l.h.s.\ of \eqref{gen1} equals
\begin{equation}\label{proof1}\ell \WS\left(\widetilde N_1 > 0, \, S_1 \in \dif s\right) \WS\left(S_2 < s, \ldots,  S_\ell < s\right)
\end{equation}
for $s\in [0,1]$.
Since by assumption the $S_k$ are i.i.d.\ copies of $ S_1$, the rightmost factor in \eqref{proof1} equals 
$$\WS\left(S_1< s\right)^{\ell-1} = \EW\left[\WS\left( S_1 < s \mid \widetilde N_1\right)\right]^{\ell-1} = \EW\left[s^{\widetilde N_1}\right]^{\ell-1} = \EW\left[s^{\widetilde N}\right]^{\ell-1}.$$
Hence, \eqref{proof1} equals the r.h.s.\ of \eqref{gen1}, completing the proof of the lemma.
\end{proof}
The following corollary is immediate.
 \begin{corollary}\label{coro2} Let $L$ be an $\N_0$-valued random variable that is independent of all the random variables appearing in \Cref{Lemma1}, with $\WS(L=\ell) = p_\ell$, $\ell \in \N_0$. Then we have for all $\ell \in \N_0$,
 \begin{equation}\label{gen2}
\WS\left(L=\ell,\, \widetilde N_1+\ldots+\widetilde N_\ell > 0, \, S^{(\ell)} \in \dif s\right) = \ell p_\ell\, \EW\left[s^{\widetilde N}\right]^{\ell-1} \WS\left(\widetilde N_1 > 0, \,S_1 \in \dif s\right), \quad  0\le s \le 1.
\end{equation}
 \end{corollary}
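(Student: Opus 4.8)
The plan is to deduce the corollary in one step from \Cref{Lemma1}, using only that $L$ is independent of the whole array consisting of $\widetilde N_1,\widetilde N_2,\ldots$ together with all the markers $U_{k,j}$. First I would observe that, for fixed $\ell$, the event $\{\widetilde N_1+\ldots+\widetilde N_\ell>0\}$ and the random variable $S^{(\ell)}$ are functions of $(\widetilde N_1,\ldots,\widetilde N_\ell)$ and the associated markers alone, hence independent of $L$. Conditioning on $\{L=\ell\}$ therefore factorises the left-hand side of \eqref{gen2} as
\[
\WS\left(L=\ell\right)\,\WS\left(\widetilde N_1+\ldots+\widetilde N_\ell>0,\ S^{(\ell)}\in\dif s\right)
= p_\ell\,\WS\left(\widetilde N_1+\ldots+\widetilde N_\ell>0,\ S^{(\ell)}\in\dif s\right).
\]

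For $\ell\in\N$ I would then substitute the identity \eqref{gen1} for the second factor, which immediately produces the claimed expression $\ell p_\ell\,\EW[s^{\widetilde N}]^{\ell-1}\,\WS(\widetilde N_1>0,\,S_1\in\dif s)$. The case $\ell=0$ has to be checked separately but is trivial: the left-hand side vanishes because $\widetilde N_1+\ldots+\widetilde N_0=0$ renders the event $\{\,\cdot>0\,\}$ empty (and $S^{(0)}=\max(\emptyset)=-\infty\notin[0,1]$), while the right-hand side vanishes through the prefactor $\ell=0$.

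I do not expect any genuine obstacle here — this is precisely why the text calls the statement \emph{immediate}. The only point requiring a moment's care is to invoke the independence hypothesis on $L$ against the entire collection of random variables appearing in \Cref{Lemma1}, rather than merely against the offspring counts $\widetilde N_k$, so that conditioning on $\{L=\ell\}$ genuinely leaves the joint law of $\bigl(\{\widetilde N_1+\ldots+\widetilde N_\ell>0\},\,S^{(\ell)}\bigr)$ unchanged.
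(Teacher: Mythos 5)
Your argument is correct and is exactly the intended one: the paper gives no proof, calling the corollary ``immediate'' from \Cref{Lemma1}, and what you have written is precisely that one-step deduction (factor out $p_\ell$ by independence, then apply \eqref{gen1}). Your explicit check of the $\ell=0$ case and your remark that the independence must be against the whole array, not just the $\widetilde N_k$, are both sound and slightly more careful than the paper itself.
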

 \section{Proof of \texorpdfstring{\Cref{th:1-general}}{Theorem 1}}\label{proof2}
We prove the statement \eqref{th:1-general} by induction over $j$, {\em simultaneously} over all time horizons $T > 0$. 
We write $\mathbf P^T$ for the probability referring to time horizon $T$;  this will  be helpful in the induction step where we  will encounter two different time horizons.\\
For $j=0$, both sides of \eqref{th:1-general} are equal to $ \mu((T,\infty))\, \dif s$.\\
For $j=1$, on the event $\{T_1\in \dif{t_1}\}$, we can directly apply \Cref{coro2} to the markers of the $L_1$ subtrees produced in this event. These subtrees live $T-t_1$ long and thus have sizes distributed as $N_{T-t_1}$.
So the left side of \eqref{th:1-general} equals
\[
 \WS\left( \tau_1 \in \dif{t_1}\right) p_{\ell_1} \ell_1 \EW\left[s^{N_{T-t_1}} \right]^{\ell_1-1} \WS^{T-t_1} \left( T_1 > T-t_1, \, S\in \dif{s} \right),
\]
which is using the $j=0$ case. This is equal to the right hand side of \eqref{th:1-general}.\\ 
Now assume we have proved \eqref{th:1-general} for all time horizons $T'$ with $j-1$ (in place of $j$), for all times~${t_1', \dots, t_{j-1}'\le T'}$, sizes $\ell_1', \dots, \ell_{j-1}' \in \mathbb N$ and $s\in [0,1]$. 
On the event $\left\{ T_1 \in \dif t_1, L_1 = \ell_1 \right\}$ the descendants of the $\ell_1$ siblings in the first branching event form $\ell_1$ independent and identically distributed trees on the time interval $[t_1, T]$. 
Thus, using \Cref{coro2} and setting ${t_1':= t_2-t_1, \ldots, t_{j-1}'= t_j-t_{1}}$, we obtain that the left hand side of \eqref{th:1-general} equals 
\begin{equation}
\begin{split}
     \WS\left( \tau_1 \in \dif{t_1}\right) p_{\ell_1} \ell_1 &\EW\left[s^{N_{T-t_1}} \right]^{\ell_1-1} \\&\cdot \WS^{T-t_1} \left( J=j-1, \, T_1 \in \dif{t_1}', \ldots, T_{j-1} \in \dif{t_{j-1}}', \, N_{T-t_1} >0, \, S\in \dif{s} \right).
\end{split}
\end{equation}
By the induction assumption, this is equal to 
\begin{equation}
    \WS\left( \tau_1 \in \dif{t_1} \right) p_{\ell_1} \ell_1 \WS \left(  \tau_1' \in \dif{t_1}', \ldots, \tau_{j-1}' \in \dif{t_{j-1}'}, \, \tau_j' \geq T-t_1 \right)   \prod_{i=2}^j \left(\ell_i p_{\ell_i} \EW\left[s^{N_{T-t_i}} \right]^{\ell_i-1} \right),
    \label{eq:indfolth21}
\end{equation}
 where $\left( \tau_1', \tau_2', \ldots \right) $ have the same distribution as $\left(\tau_1, \tau_2 ,\ldots \right).$ Obviously \eqref{eq:indfolth21} equals the r.h.s of \eqref{th:1-general}. This completes the induction step and concludes the proof. \hfill $\qed$ 
    \section{Conditioning on a marker value}\label{secCRW}
Chauvin, Rouault and Wakolbinger  \cite{CW} consider a Markov process with an atomless transition probability indexed by a continuous-time Galton-Watson-tree and  they then condition on an individual at time $T$ to be at a given location. 

\begin{wrapfigure}[18]{r}{0.5\textwidth}
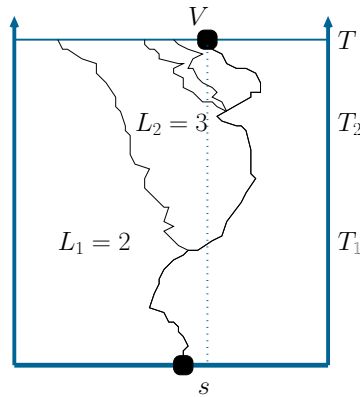

  \centering
   \scalebox{0.4}{ {\pgfkeys{/pgf/fpu/.try=false}%
\ifx\XFigwidth\undefined\dimen1=0pt\else\dimen1\XFigwidth\fi
\divide\dimen1 by 5709
\ifx\XFigheight\undefined\dimen3=0pt\else\dimen3\XFigheight\fi
\divide\dimen3 by 6384
\ifdim\dimen1=0pt\ifdim\dimen3=0pt\dimen1=3946sp\dimen3\dimen1
  \else\dimen1\dimen3\fi\else\ifdim\dimen3=0pt\dimen3\dimen1\fi\fi
\tikzpicture[x=+\dimen1, y=+\dimen3]
{\ifx\XFigu\undefined\catcode`\@11
\def\temp{\alloc@1\dimen\dimendef\insc@unt}\temp\XFigu\catcode`\@12\fi}
\XFigu3946sp
\ifdim\XFigu<0pt\XFigu-\XFigu\fi
\pgfdeclarearrow{
  name = xfiga0,
  parameters = {
    \the\pgfarrowlinewidth \the\pgfarrowlength \the\pgfarrowwidth},
  defaults = {
	  line width=+7.5\XFigu, length=+120\XFigu, width=+60\XFigu},
  setup code = {
    \dimen7 2.15\pgfarrowlength\pgfmathveclen{\the\dimen7}{\the\pgfarrowwidth}
    \dimen7 2\pgfarrowwidth\pgfmathdivide{\pgfmathresult}{\the\dimen7}
    \dimen7 \pgfmathresult\pgfarrowlinewidth
    \pgfarrowssettipend{+\dimen7}
    \pgfarrowssetbackend{+-\pgfarrowlength}
    \dimen9 -0.5\pgfarrowlinewidth
    \pgfarrowssetvisualbackend{+\dimen9}
    \pgfarrowssetlineend{+-0.5\pgfarrowlinewidth}
    \pgfarrowshullpoint{+\dimen7}{+0pt}
    \pgfarrowsupperhullpoint{+-\pgfarrowlength}{+0.5\pgfarrowwidth}
    \pgfarrowssavethe\pgfarrowlinewidth
    \pgfarrowssavethe\pgfarrowlength
    \pgfarrowssavethe\pgfarrowwidth
  },
  drawing code = {\pgfsetdash{}{+0pt}
    \ifdim\pgfarrowlinewidth=\pgflinewidth\else\pgfsetlinewidth{+\pgfarrowlinewidth}\fi
    \pgfpathmoveto{\pgfqpoint{-\pgfarrowlength}{0.5\pgfarrowwidth}}
    \pgfpathlineto{\pgfqpoint{0pt}{0pt}}
    \pgfpathlineto{\pgfqpoint{-\pgfarrowlength}{-0.5\pgfarrowwidth}}
    \pgfusepathqstroke
  }
}
\clip(3483,-6417) rectangle (9192,-33);
\tikzset{inner sep=+0pt, outer sep=+0pt}
\pgfsetfillcolor{red}
\pgftext[base,left,at=\pgfqpointxy{8850}{-3975}] {\fontsize{24}{28.8}\usefont{T1}{ptm}{m}{n}$T_1$}
\pgfsetlinewidth{+7.5\XFigu}
\pgfsetstrokecolor{white}
\pgfsetarrows{[line width=7.5\XFigu]}
\pgfsetarrowsend{xfiga0}
\draw (3525,-150)--(3525,-6300);
\pgfsetlinewidth{+60\XFigu}
\pgfsetstrokecolor{blue}
\pgfsetarrows{[line width=45\XFigu, width=105\XFigu]}
\draw (3825,-5775)--(3825,-300);
\pgfsetarrowsend{}
\pgfsetlinewidth{+15\XFigu}
\pgfsetstrokecolor{black}
\draw (6450,-5775)--(6450,-5550)--(6525,-5475)--(6450,-5325)--(6375,-5250)--(6375,-5175)
  --(6300,-5100)--(6225,-5100)--(6150,-4950)--(5925,-4875)--(6000,-4650)--(6075,-4500)
  --(6075,-4425)--(6150,-4275)--(6300,-4125)--(6450,-4050)--(6525,-3975)--(6675,-3975)
  --(6900,-3825)--(7050,-3750)--(7125,-3525)--(7350,-3225)--(7425,-3075)--(7575,-2850)
  --(7500,-2775)--(7500,-2550)--(7500,-2400)--(7425,-2100)--(7350,-2025)--(7275,-2025)
  --(7050,-1950)--(6975,-1875)--(7500,-1575)--(7500,-1500)--(7650,-1500)--(7650,-1350)
  --(7575,-1200)--(7425,-1125)--(7275,-1125)--(7125,-1050)--(7050,-900)--(6900,-825)
  --(6825,-750);
\pgfsetlinewidth{+60\XFigu}
\pgfsetstrokecolor{blue}
\pgfsetarrowsend{xfiga0}
\draw (8700,-5775)--(8700,-300);
\pgfsetlinewidth{+75\XFigu}
\pgfsetarrowsend{}
\draw (3825,-5775)--(8700,-5775);
\pgfsetlinewidth{+15\XFigu}
\pgfsetstrokecolor{black}
\draw (6525,-3975)--(6375,-3900)--(6300,-3825)--(6150,-3675)--(6300,-3600)--(6150,-3525)
  --(6225,-3300)--(6150,-3300)--(6000,-3150)--(5850,-3000)--(5850,-2925)--(5850,-2850)
  --(5925,-2700)--(5700,-2625)--(5625,-2475)--(5625,-2400)--(5625,-2250)--(5625,-2175)
  --(5550,-2100)--(5550,-2025)--(5475,-1800)--(5400,-1725)--(5400,-1575)--(5250,-1350)
  --(5100,-1200)--(5100,-1050)--(4875,-975);
\pgfsetlinewidth{+7.5\XFigu}
\pgfsetcolor{black}
\filldraw (6600,-5925) [rounded corners=+105\XFigu] rectangle (6300,-5625);
\pgfsetlinewidth{+30\XFigu}
\pgfsetstrokecolor{blue}
\draw (3825,-675)--(8700,-675);
\pgfsetlinewidth{+15\XFigu}
\pgfsetstrokecolor{black}
\draw (6450,-5775)--(6450,-5550)--(6525,-5475)--(6450,-5325)--(6375,-5250)--(6375,-5175)
  --(6300,-5100)--(6225,-5100)--(6150,-4950)--(5925,-4875)--(6000,-4650)--(6075,-4500)
  --(6075,-4425)--(6150,-4275)--(6300,-4125)--(6450,-4050)--(6525,-3975)--(6675,-3975)
  --(6900,-3825)--(7050,-3750)--(7125,-3525)--(7350,-3225)--(7425,-3075)--(7575,-2850)
  --(7500,-2775)--(7500,-2550)--(7500,-2400)--(7425,-2100)--(7350,-2025)--(7275,-2025)
  --(7050,-1950)--(6975,-1875)--(7500,-1575)--(7500,-1500)--(7650,-1500)--(7650,-1350)
  --(7575,-1200)--(7425,-1125)--(7275,-1125)--(7125,-1050)--(7050,-900)--(6900,-825)
  --(6825,-750);
\pgfsetlinewidth{+7.5\XFigu}
\filldraw (6975,-825) [rounded corners=+105\XFigu] rectangle (6675,-525);
\pgfsetlinewidth{+30\XFigu}
\pgfsetstrokecolor{blue}
\pgfsetdash{{+15\XFigu}{+90\XFigu}}{+15\XFigu}
\draw (6825,-675)--(6825,-5775);
\pgfsetlinewidth{+15\XFigu}
\pgfsetdash{}{+0pt}
\pgfsetstrokecolor{black}
\draw (7125,-1800)--(7050,-1650)--(6900,-1575)--(6975,-1500)--(6900,-1425)--(6750,-1350)
  --(6675,-1275)--(6600,-1125)--(6525,-1125)--(6600,-975)--(6750,-900)--(6525,-825)
  --(6375,-750)--(6300,-675);
\draw (4875,-975)--(4650,-825)--(4575,-825)--(4500,-675);
\draw (7125,-1800)--(6975,-1725)--(6900,-1650)--(6750,-1650)--(6675,-1575)--(6600,-1500)
  --(6450,-1425)--(6450,-1350)--(6450,-1275)--(6300,-1200)--(6300,-1125)--(6450,-1050)
  --(6375,-1050)--(6075,-975)--(6075,-900)--(6000,-825)--(5925,-825)--(5850,-750)
  --(5850,-675);
\pgfsetlinewidth{+7.5\XFigu}
\pgfsetstrokecolor{white}
\pgfsetdash{}{+0pt}
\pgfsetarrows{[line width=7.5\XFigu, width=60\XFigu]}
\pgfsetarrowsend{xfiga0}
\draw (3525,-75)--(9150,-75);
\draw (3525,-6375)--(9150,-6375);
\pgfsetfillcolor{red}
\pgftext[base,left,at=\pgfqpointxy{4500}{-3975}] {\fontsize{24}{28.8}\usefont{T1}{ptm}{m}{n}$L_1=2$}
\pgfsetfillcolor{blue}
\pgftext[base,left,at=\pgfqpointxy{6675}{-6225}] {\fontsize{24}{28.8}\usefont{T1}{ptm}{m}{n}$s$}
\pgfsetfillcolor{black}
\pgftext[base,left,at=\pgfqpointxy{6525}{-450}] {\fontsize{24}{28.8}\usefont{T1}{ptm}{m}{n}$V$}
\pgfsetfillcolor{blue}
\pgftext[base,left,at=\pgfqpointxy{8850}{-825}] {\fontsize{24}{28.8}\usefont{T1}{ptm}{m}{n}$T$}
\pgfsetfillcolor{red}
\pgftext[base,left,at=\pgfqpointxy{5700}{-2100}] {\fontsize{24}{28.8}\usefont{T1}{ptm}{m}{n}$L_2=3$}
\pgftext[base,left,at=\pgfqpointxy{8850}{-2100}] {\fontsize{24}{28.8}\usefont{T1}{ptm}{m}{n}$T_2$}
\draw (9150,-6225)--(9150,-300);
\endtikzpicture}
  \caption{An example for a realisation of the random variables $L_1, L_2, T_1, T_2$ in the sampling regime described in Section~\ref{secCRW}.}\label{fig:2}
\end{wrapfigure}

 To relate this to the framework described in the {\hypersetup{hidelinks}\hyperref[sec:introduction]{Introduction}}, we assume that each individual alive at time $T$ in the Bellmann-Harris tree carries a marker in some standard Borel space $E$
 and these random marks have the following properties:
\begin{enumerate}
    \item[(M1)] \label{marker:property1}Their marginal distributions   (denoted by $\nu$)   are identical and do not depend  on the reproduction events
    \item[(M2)]\label{marker:property2} A.s.\ no pair of marks is equal. 
\end{enumerate}

Think for example of branching Brownian motion: The positions
of the different particles clearly depend on each other via the genealogy, however,  at time $t$ the marginal distribution of the position of each particle is a centered Gaussian random variable with variance $t$, irrespective of its past genealogical events in the underlying continuous-time Galton-Watson tree. Thus \hyperref[marker:property1]{(M1)}, is fulfilled. Since two correlated Gaussian random variables are a.s. not equal if the correlation coefficient is not equal to one,  \hyperref[marker:property2]{(M2)}  is also fulfilled. 

  We now condition on $\left\{N_T>0\right\}$ and, for given $s\in E$, on one of the $N_T$ individuals 
  having marker value $s$.
Remember the previous notation:
  Denote by $V$ the individual having marker $s$. 
Let $J$ be the random number of reproduction events along the ancestral lineage of $V$ and $0<T_1 < T_2 < \cdots < T_J< T$ be the random times of these reproduction events. Let $L_1, \ldots, L_J$  be the offspring sizes in these reproduction events and let $0<\tau_1 < \tau_2 < \cdots$ be the random arrival times in a renewal process with interarrival time distribution $\mu$. Figure~\ref{fig:2} depicts a sample realisation. \\
The following Theorem generalises (part of) \cite[Theorem~2]{CW} to general lifetime time distributions.
  \begin{theorem}\label{th:CW} For $j \ge 0$, $0  < t_1<\ldots < t_j < T$ and $\ell_1,\ldots, \ell_j \in \N$ we have for $\nu$-almost all $s$
  \begin{eqnarray}\begin{aligned} \label{fullformula2cw}
  &\WS\left(\left.J=j, \, T_1\in \dif t_1, \ldots, T_j\in \dif t_j,\, L_1=\ell_1, \ldots, L_j=\ell_j \right\vert N_T>0, \exists\, \mathrm{ marker }\in \dif s  \right) \\
  &= \frac1{\EW[N_T]}\WS \left(\tau_1 \in \dif t_1, \ldots, \tau_j \in \dif t_j, \tau_{j+1} \ge T\right)     \prod_{i=1}^j \ell_{i} p_{\ell_i}.
  \end{aligned}
  \end{eqnarray}
\end{theorem}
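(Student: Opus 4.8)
The proof of \Cref{th:CW} should follow the same inductive skeleton as the proof of \Cref{th:1-general}, but with the marker structure of Section~\ref{maxmarkers} replaced by a ``single distinguished marker'' structure. The key observation is that conditioning on $\{N_T > 0, \exists\ \text{marker} \in \dif s\}$ and then picking the individual $V$ carrying that marker is, by property (M1), the same as size-biasing: the (unnormalized) probability that a \emph{given} individual alive at time $T$ carries the distinguished marker value $s$ is $\nu(\dif s)$, independent of everything else, so summing over the $N_T$ individuals produces a factor $N_T$ — and the normalization $\WS(N_T>0,\exists\,\text{marker}\in\dif s)$ equals $\EW[N_T]\,\nu(\dif s)$ since $\WS(N_T>0)$ is irrelevant (if $N_T=0$ there is no marker). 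This is where the $1/\EW[N_T]$ prefactor and the disappearance of any $F_{T-t}(s)$-type biasing terms come from: unlike the max-of-markers setting, the sibling subtrees need only \emph{contain at least one individual to be eligible, weighted by their sizes}, but here the distinguished marker is in exactly one place, so the sibling subtrees contribute no constraint at all beyond what the renewal structure already encodes.

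**Key steps.** First I would establish the analogue of \Cref{coro2}: if $V$ is the individual carrying the distinguished marker and we are on $\{T_1 \in \dif t_1, L_1 = \ell_1\}$, then conditionally the $\ell_1$ sibling subtrees are i.i.d.\ copies of a Bellman–Harris tree run for time $T - t_1$, the distinguished marker lies in a size-biased pick among these $\ell_1$ subtrees, and — crucially — after this decomposition the remaining randomness along $V$'s lineage inside the chosen subtree is governed by the \emph{same} conditional law as a fresh tree with horizon $T - t_1$ conditioned to carry the distinguished marker. Quantitatively: the $\ell_1$ subtrees have total size $N^{(1)} + \dots + N^{(\ell_1)}$ with $N^{(i)}$ i.i.d.\ $\sim N_{T-t_1}$, the marker sits in subtree $i$ with probability proportional to $N^{(i)}$, and $\EW[N^{(1)} + \dots + N^{(\ell_1)}] = \ell_1 \EW[N_{T-t_1}]$. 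So the recursion for the unnormalized joint density reads
\begin{equation*}
\WS(\cdots) = \WS(\tau_1 \in \dif t_1)\, p_{\ell_1}\,\ell_1\,\EW[N_{T-t_1}]\cdot\frac1{\ell_1\EW[N_{T-t_1}]}\cdot\ell_1\cdot(\text{conditional law of the rest with horizon }T-t_1),
\end{equation*}
where one factor $\ell_1$ is the number of ways to choose which sibling is an ancestor of $V$ (the combinatorial ``bias''), the factor $\EW[N_{T-t_1}]$ and $1/(\ell_1\EW[N_{T-t_1}])$ come from the size-biased subtree pick and its normalization, and $\ell_1 p_{\ell_1}$ survives. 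The $\EW[N_{T-t_1}]$ factors telescope against the normalization when one unwinds the induction, leaving exactly $\frac1{\EW[N_T]}\prod \ell_i p_{\ell_i}$ times the renewal probability.

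**Carrying out the induction.** For $j = 0$ both sides equal $\frac1{\EW[N_T]}\WS(\tau_1 \ge T)$ after normalizing — here one must check the base case carefully, since ``$J = 0$'' means the root individual itself survives to $T$ and carries the marker, and on $\{\tau_1 \ge T\}$ the tree is a single individual so $N_T \in \{0,1\}$ and the unnormalized density is $\WS(\tau_1 \ge T)\nu(\dif s)$, matching $\EW[N_T]\nu(\dif s)\cdot\frac1{\EW[N_T]}\WS(\tau_1\ge T)$. For the induction step, on $\{T_1 \in \dif t_1, L_1 = \ell_1\}$ apply the size-biased-subtree decomposition above: the distinguished marker lies in one of the $\ell_1$ i.i.d.\ subtrees of duration $T - t_1$, picked proportionally to size, and conditionally on which subtree, the event $\{J = j, T_2 \in \dif t_2, \dots, L_j = \ell_j\}$ translates (with $t_i' := t_i - t_1$) into the corresponding $(j-1)$-event for that subtree with horizon $T - t_1$, \emph{conditioned on that subtree carrying the distinguished marker}. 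Invoking the induction hypothesis for horizon $T - t_1$ and using $\WS(\text{subtree }i\text{ carries marker}\mid N^{(i)}) \propto N^{(i)}$ to turn the size-biasing into a clean $\EW[N_{T-t_1}]$ factor, the product collapses to \eqref{fullformula2cw}.

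**Main obstacle.** The delicate point is making the size-biased pick among the sibling subtrees rigorous and checking that it produces \emph{exactly} a factor $\EW[N_{T-t_1}]/(\ell_1 \EW[N_{T-t_1}]) = 1/\ell_1$ that cancels the combinatorial $\ell_1$ from ``which sibling is the ancestor,'' leaving bare $\ell_1 p_{\ell_1}$ — in other words, verifying that, in contrast to \Cref{th:1-general}, there is \emph{no} leftover $\EW[s^{N_{T-t_i}}]^{\ell_i - 1}$ factor because a single marker (rather than a maximum of many) imposes no ``all other subtrees smaller'' constraint. Equivalently, one can bypass the recursion entirely and argue directly: on $\{N_T > 0, \exists\,\text{marker}\in\dif s\}$, conditioning on the marked individual $V$ is a size-biased sample, and a size-biased ancestral lineage in a Bellman–Harris tree is exactly a renewal process of branch times (each branch size $L_i$ appearing with its size-biased law $\ell_i p_{\ell_i}/m$, where $m = \sum \ell p_\ell$) — this is the continuous-time analogue of the classical many-to-one / spine identity, and the $1/\EW[N_T]$ is precisely the normalization turning the spine measure into a probability. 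I would present the inductive version as the primary proof since it parallels Section~\ref{proof2} and needs no separate spine formalism, but flag the size-biased-spine interpretation as the conceptual reason the formula is so much simpler than \eqref{fullformula}.
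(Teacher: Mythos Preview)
Your proposal is correct and follows the same inductive skeleton as the paper (induction over $j$, simultaneously over all horizons $T$, decomposing at the first reproduction event via the branching property). The difference is in execution: you track conditional laws and let $\EW[N_{T-t_1}]$ factors telescope through a size-biased subtree pick, whereas the paper first converts \eqref{fullformula2cw} into the equivalent \emph{unnormalized} statement
\[
\WS\bigl(J=j,\ldots,N_T>0,\ \exists\,\text{marker}\in\dif s\bigr)
=\WS\bigl(\tau_1\in\dif t_1,\ldots,\tau_{j+1}\ge T\bigr)\prod_{i=1}^j\ell_i p_{\ell_i}\,\nu(\dif s),
\]
using (M1) to compute $\WS(N_T>0,\exists\,\text{marker}\in\dif s)=\EW[N_T]\,\nu(\dif s)$ once and for all. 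The induction step is then a single line: by (M2) the marker lies in at most one of the $\ell_1$ subtrees, so $\mathbf 1_{\{\exists\,\text{marker}\in\dif s\}}=\sum_{k=1}^{\ell_1}\mathbf 1_{\{\mathcal U_k\cap\dif s\ne\emptyset\}}$ a.s., and exchangeability of the subtrees yields the factor $\ell_1$ directly---no size-biased pick, no normalization to carry, no telescoping. Your size-biased-spine reading is conceptually right and explains \emph{why} the formula is so clean, but the paper's indicator decomposition makes the mechanics shorter; in particular your displayed recursion with the three separate $\ell_1$-related factors is more bookkeeping than is actually needed.
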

\begin{proof}
Because of properties \hyperref[marker:property1]{(M1)}, \hyperref[marker:property2]{(M2)}
we have
$$\WS(N_T > 0, \exists \, \mathrm{ marker }\in \dif s) = \EW[N_T]\nu(\dif s), \quad s \in E.$$
Hence \eqref{fullformula2cw} is equivalent to
 \begin{eqnarray}\begin{aligned} \label{fullformula3cw}
  &\WS\left(J=j, \, T_1\in \dif t_1, \ldots, T_j\in \dif t_j,\, L_1=\ell_1, \ldots, L_j=\ell_j,  N_T>0, \exists\, \mathrm{ marker }\in \dif s  \right) \\
  &=\WS \left(\tau_1 \in \dif t_1, \ldots, \tau_j \in \dif t_j, \tau_{j+1} \ge T\right)     \prod_{i=1}^j \ell_{i} p_{\ell_i} \, \nu(\dif s).
  \end{aligned}
  \end{eqnarray}
  As in the proof of \Cref{th:1-general} we prove the statement
\eqref{fullformula3cw} by induction over $j$, {\em simultaneously} over all time horizons $T > 0$.
As before we write $\mathbf P^T$ for the probability referring to time horizon $T$.
For $j=0$ the statement is true, since 
$$\WS^T(J=0,  N_T>0, \exists\, \mathrm{ marker }\in \dif s) = \WS\left(\tau_1 \le T\right)\, \nu(ds).$$
Assume we have proved \eqref{fullformula3cw} for all  time horizons $T'$ with $j-1$ (in place of $j$), for all times~${t_1', \dots, t_{j-1}'\le T'}$, sizes $\ell_1', \dots, \ell_{j-1}' \in \mathbb N$ and marker distributions with the same marginal $\nu$ that satisfy conditions \hyperref[marker:property1]{(M1)}, \hyperref[marker:property2]{(M2)}.
    Turning to \eqref{fullformula3cw} as it stands, we note that   on  $\{T_1 = t_1, L_1=\ell_1\}$,   the descendants of the $\ell_1$ siblings in the first branching event form $\ell_1$ independent and identically distributed trees on the time interval $[t_1, T]$. Let $\mathcal U_k,\, k=1,\dots, \ell_1$, be the set of markers of the individuals at time~$T$ that descend from the $k$-th sibling. By randomly permuting these $\ell_1$ siblings, we can assume that the set-valued random variables $\mathcal U_k,\,k=1,\ldots, \ell_1$, are exchangeable.   
    Note that the markers in each $\mathcal U_k$ satisfy conditions \hyperref[marker:property1]{(M1)}, \hyperref[marker:property2]{(M2)}.   Because the markers are a.s.\ pairwise different by assumption, the marker $s$ belongs to at most one of those~$\mathcal U_k$, so
\begin{equation}
    \mathbf 1_{\{ \exists\, \mathrm{ marker }\in \dif s  \}} = \sum_{k=1}^{\ell_1} \mathbf 1_{\{\mathcal U_k \cap \dif s\ne \emptyset\}} \, \mbox{ a.s.}
\end{equation}
Note that for the sake of intuition we use a differential notation for what formally is an (integral) equality for the distribution of the random point measure formed by the individuals’ markers, which by assumption \hyperref[marker:property2]{(M2)} can be seen as a random set of points.\\
 Putting $t_1' := t_2-t_1, \ldots, t_{j-1}' := t_j-t_1$ we thus infer, using the branching property of the Bellman-Harris tree,  that the left hand side of \eqref{fullformula3cw} equals
\begin{equation}\label{fullformula4cw}
\begin{split}
    &\WS(\tau_1 \in \dif t_1)p_{\ell_1} \ell_1\\
    \cdot&\WS^{T-t_1}\left(J=j-1, \,T_1 \in \dif t_1',\ldots, T_{j-1} \in \dif t_{j-1}', L_1=\ell_2,\ldots, L_{j-1}=\ell_j, N_{T-t_1} > 0, \exists\, {\rm mark} \in \dif s\right) .
\end{split}
\end{equation}
By the induction assumption this is equal to 
\begin{equation}\label{fullformula5cw}\WS(\tau_1 \in \dif t_1)p_{\ell_1} \ell_1\WS \left(\tau_1' \in \dif t_1', \ldots, \tau_{j-1}' \in \dif t_{j-1}', \tau_{j}' \ge T-t_1\right)     \prod_{i=2}^j \ell_{i} p_{\ell_i} \, \nu(\dif s),
\end{equation}
where $(\tau_1', \tau_2', \ldots)$ have the same distribution as $(\tau_1, \tau_2,\ldots)$.
Obviously \eqref{fullformula5cw} equals the r.h.s. of \eqref{fullformula3cw}, which completes the induction step and concludes the proof. 
\end{proof}
 \begin{remark}
  If $\mu$ is the exponential distribution with parameter $r$, then $\tau_1, \tau_2, \ldots$ are again the points of a rate $r$ Poisson point process and \eqref{fullformula2cw} implies that reproduction events along the ancestral lineage of $V$ happen according to a time-homogeneous Poisson process with rate $r \sum_{\ell }\ell p_{\ell}$. This corresponds to the description of the events along the ancestral line of $V$ given in \cite[Theorem~1]{CW}.
\end{remark}
  \section{Sampling the left-most ancestral lineage}\label{SecGeiger}
   \begin{wrapfigure}[20]{r}{0.5\textwidth}
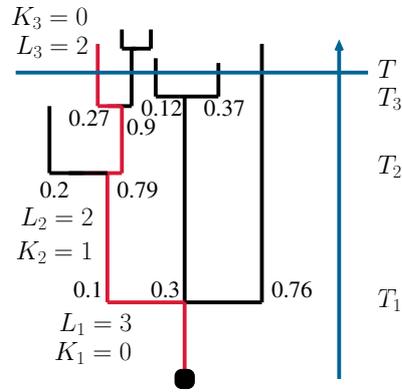

  \centering
   \scalebox{0.4}{ {\pgfkeys{/pgf/fpu/.try=false}%
\ifx\XFigwidth\undefined\dimen1=0pt\else\dimen1\XFigwidth\fi
\divide\dimen1 by 6807
\ifx\XFigheight\undefined\dimen3=0pt\else\dimen3\XFigheight\fi
\divide\dimen3 by 6576
\ifdim\dimen1=0pt\ifdim\dimen3=0pt\dimen1=3946sp\dimen3\dimen1
  \else\dimen1\dimen3\fi\else\ifdim\dimen3=0pt\dimen3\dimen1\fi\fi
\tikzpicture[x=+\dimen1, y=+\dimen3]
{\ifx\XFigu\undefined\catcode`\@11
\def\temp{\alloc@1\dimen\dimendef\insc@unt}\temp\XFigu\catcode`\@12\fi}
\XFigu3946sp
\ifdim\XFigu<0pt\XFigu-\XFigu\fi
\pgfdeclarearrow{
  name = xfiga0,
  parameters = {
    \the\pgfarrowlinewidth \the\pgfarrowlength \the\pgfarrowwidth},
  defaults = {
	  line width=+7.5\XFigu, length=+120\XFigu, width=+60\XFigu},
  setup code = {
    \dimen7 2.15\pgfarrowlength\pgfmathveclen{\the\dimen7}{\the\pgfarrowwidth}
    \dimen7 2\pgfarrowwidth\pgfmathdivide{\pgfmathresult}{\the\dimen7}
    \dimen7 \pgfmathresult\pgfarrowlinewidth
    \pgfarrowssettipend{+\dimen7}
    \pgfarrowssetbackend{+-\pgfarrowlength}
    \dimen9 -0.5\pgfarrowlinewidth
    \pgfarrowssetvisualbackend{+\dimen9}
    \pgfarrowssetlineend{+-0.5\pgfarrowlinewidth}
    \pgfarrowshullpoint{+\dimen7}{+0pt}
    \pgfarrowsupperhullpoint{+-\pgfarrowlength}{+0.5\pgfarrowwidth}
    \pgfarrowssavethe\pgfarrowlinewidth
    \pgfarrowssavethe\pgfarrowlength
    \pgfarrowssavethe\pgfarrowwidth
  },
  drawing code = {\pgfsetdash{}{+0pt}
    \ifdim\pgfarrowlinewidth=\pgflinewidth\else\pgfsetlinewidth{+\pgfarrowlinewidth}\fi
    \pgfpathmoveto{\pgfqpoint{-\pgfarrowlength}{0.5\pgfarrowwidth}}
    \pgfpathlineto{\pgfqpoint{0pt}{0pt}}
    \pgfpathlineto{\pgfqpoint{-\pgfarrowlength}{-0.5\pgfarrowwidth}}
    \pgfusepathqstroke
  }
}
\clip(3285,-6312) rectangle (10092,264);
\tikzset{inner sep=+0pt, outer sep=+0pt}
\pgfsetfillcolor{red}
\pgftext[base,left,at=\pgfqpointxy{9000}{-4500}] {\fontsize{24}{28.8}\usefont{T1}{ptm}{m}{n}$T_1$}
\pgfsetlinewidth{+7.5\XFigu}
\pgfsetstrokecolor{white}
\pgfsetarrows{[line width=7.5\XFigu]}
\pgfsetarrowsend{xfiga0}
\draw (3525,-6225)--(9150,-6225);
\pgfsetlinewidth{+60\XFigu}
\pgfsetstrokecolor{black}
\pgfsetarrowsend{}
\draw (6000,-4425)--(7200,-4425);
\pgfsetstrokecolor{blue}
\pgfsetarrows{[line width=45\XFigu, width=105\XFigu]}
\pgfsetarrowsend{xfiga0}
\draw (8400,-5625)--(8400,-300);
\pgfsetstrokecolor{black}
\pgfsetarrowsend{}
\draw (6000,-4425)--(6000,-1200);
\draw (6525,-1200)--(6525,-675);
\draw (7200,-4425)--(7200,-375);
\draw (4425,-2400)--(3900,-2400);
\draw (5175,-1350)--(5175,-450);
\draw (5175,-450)--(5475,-450);
\draw (5250,-450)--(5025,-450);
\draw (5925,-1200)--(5550,-1200);
\draw (5550,-1200)--(5550,-600);
\pgfsetfillcolor{white}
\filldraw (5025,-450)--(5025,-375)--(5025,-150);
\draw (5475,-450)--(5475,-150);
\pgfsetlinewidth{+7.5\XFigu}
\pgfsetstrokecolor{white}
\pgfsetarrows{[line width=7.5\XFigu, width=60\XFigu]}
\pgfsetarrowsend{xfiga0}
\draw (3525,-150)--(3525,-6300);
\pgfsetlinewidth{+60\XFigu}
\pgfsetstrokecolor{black}
\pgfsetarrowsend{}
\draw (3900,-2400)--(3900,-1350);
\draw (4950,-1350)--(5175,-1350);
\draw (5925,-1200)--(6525,-1200);
\pgfsetstrokecolor{red}
\draw (6000,-5625)--(6000,-4425);
\draw (6000,-4425)--(4800,-4425);
\draw (4800,-4425)--(4800,-2400);
\draw (4425,-2400)--(5025,-2400);
\draw (5025,-2400)--(5025,-1350);
\draw (4950,-1350)--(4650,-1350);
\draw (4650,-1350)--(4650,-375);
\pgfsetstrokecolor{black}
\draw (4800,-2400)--(4200,-2400);
\pgfsetstrokecolor{red}
\draw (5025,-1350)--(4800,-1350);
\pgfsetlinewidth{+7.5\XFigu}
\pgfsetstrokecolor{white}
\pgfsetarrowsend{xfiga0}
\draw (10050,-6075)--(10050,-150);
\draw (9150,-6225)--(9150,-300);
\pgfsetlinewidth{+60\XFigu}
\pgfsetstrokecolor{blue}
\pgfsetarrowsend{}
\draw (3375,-825)--(8850,-825);
\pgfsetfillcolor{blue}
\pgftext[base,left,at=\pgfqpointxy{5100}{-1800}] {\fontsize{22}{26.4}\usefont{T1}{ptm}{m}{n}0.9}
\pgftext[base,left,at=\pgfqpointxy{6300}{-1500}] {\fontsize{22}{26.4}\usefont{T1}{ptm}{m}{n}0.37}
\pgftext[base,left,at=\pgfqpointxy{4275}{-4350}] {\fontsize{22}{26.4}\usefont{T1}{ptm}{m}{n}0.1}
\pgftext[base,left,at=\pgfqpointxy{5475}{-4350}] {\fontsize{22}{26.4}\usefont{T1}{ptm}{m}{n}0.3}
\pgftext[base,left,at=\pgfqpointxy{7350}{-4350}] {\fontsize{22}{26.4}\usefont{T1}{ptm}{m}{n}0.76}
\pgftext[base,left,at=\pgfqpointxy{3750}{-2775}] {\fontsize{22}{26.4}\usefont{T1}{ptm}{m}{n}0.2}
\pgfsetfillcolor{red}
\pgftext[base,left,at=\pgfqpointxy{3450}{-3225}] {\fontsize{24}{28.8}\usefont{T1}{ptm}{m}{n}$L_2=2$}
\pgftext[base,left,at=\pgfqpointxy{3975}{-5325}] {\fontsize{24}{28.8}\usefont{T1}{ptm}{m}{n}$K_1=0$}
\pgfsetfillcolor{blue}
\pgftext[base,left,at=\pgfqpointxy{4200}{-1650}] {\fontsize{22}{26.4}\usefont{T1}{ptm}{m}{n}0.27}
\pgftext[base,left,at=\pgfqpointxy{5325}{-1500}] {\fontsize{22}{26.4}\usefont{T1}{ptm}{m}{n}0.12}
\pgftext[base,left,at=\pgfqpointxy{4950}{-2775}] {\fontsize{22}{26.4}\usefont{T1}{ptm}{m}{n}0.79}
\pgfsetfillcolor{red}
\pgftext[base,left,at=\pgfqpointxy{3375}{-3750}] {\fontsize{24}{28.8}\usefont{T1}{ptm}{m}{n}$K_2=1$}
\pgftext[base,left,at=\pgfqpointxy{4050}{-4875}] {\fontsize{24}{28.8}\usefont{T1}{ptm}{m}{n}$L_1=3$}
\pgftext[base,left,at=\pgfqpointxy{3300}{-75}] {\fontsize{24}{28.8}\usefont{T1}{ptm}{m}{n}$K_3=0$}
\pgftext[base,left,at=\pgfqpointxy{3375}{-525}] {\fontsize{24}{28.8}\usefont{T1}{ptm}{m}{n}$L_3=2$}
\pgfsetfillcolor{blue}
\pgftext[base,left,at=\pgfqpointxy{9000}{-900}] {\fontsize{24}{28.8}\usefont{T1}{ptm}{m}{n}$T$}
\pgfsetfillcolor{red}
\pgftext[base,left,at=\pgfqpointxy{9000}{-1350}] {\fontsize{24}{28.8}\usefont{T1}{ptm}{m}{n}$T_3$}
\pgftext[base,left,at=\pgfqpointxy{9000}{-2400}] {\fontsize{24}{28.8}\usefont{T1}{ptm}{m}{n}$T_2$}
\pgfsetlinewidth{+7.5\XFigu}
\pgfsetcolor{black}
\filldraw (6150,-5775) [rounded corners=+105\XFigu] rectangle (5850,-5475);
\endtikzpicture}
\caption{An example for a realisation of markers and random variables $L_1, L_2,K_1, k_2, T_1, T_2$ in the sampling regime described in Section~\ref{SecGeiger}.}\label{fig:3}  
\end{wrapfigure}
We now aim to obtain results about what Geiger \cite{geiger} calls the leftmost surviving ancestral lineage in a planar embedding of the tree: At any reproduction event we assign independent uniformly on $[0,1]$ distributed markers to all children. An individual can now be uniquely determined by the markers along its ancestral lineage. On the event $\{N_T > 0\}$, let $V$ be the individual whose markers along the entire ancestral lineage comes first in the lexicographic ordering.
Let $J$ be the random number of reproduction events and ${0 < T_1 < T_2 < \cdots < T_J \le T}$ be the random times of reproduction events along the ancestral lineage of $V$. Let $L_1, \ldots, L_J$  be the offspring sizes in these reproduction events and let $0<\tau_1 < \tau_2 < \cdots$ be the random arrival times in a renewal process with interarrival time distribution $\mu$. Denote by $K_i$ the number of siblings born at reproduction event number $i$ along the ancestral lineage of $V$ which have a lower lexicographic order than $V$ and whose descendants hence die out before time $T$. Figure~\ref{fig:3} shows a realisation for this sampling rule.
  \begin{theorem}\label{th:2-general} 
  For $j \ge 0$, $0  < t_1<\ldots < t_j < T, \,  \ell_1,\ldots, \ell_j \in \N$ and $k_i \in \left\{ 1,\ldots, \ell_i-1 \right\}$ we have
  \begin{equation}\begin{split} \label{fullformula2}
  &\WS\left(N_T>0, J=j, \, T_1\in \dif t_1, \ldots , T_j\in \dif t_j,\, L_1=\ell_1, \ldots, L_j=\ell_j, K_1=k_1,\ldots, K_j=k_j\right) \\
  &= \WS \left(\tau_1 \in \dif t_1, \ldots, \tau_j \in \dif t_j, \tau_{j+1} \ge T\right)    \prod_{i=1}^j \left(  p_{\ell_i}\WS\left( N_{T-t_i} =0\right)^{k_i} 
  \right) .
  \end{split}
  \end{equation}
\end{theorem}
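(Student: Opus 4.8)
The plan is to prove \eqref{fullformula2} by induction on $j$, simultaneously over all time horizons $T>0$, in exact parallel to the proof of \Cref{th:1-general}. The only genuinely new ingredient is a replacement for \Cref{coro2}: instead of asking which of the $\ell_1$ subtrees produced at the first reproduction event contains the \emph{maximal} marker, we must ask which subtree contains the individual that is leftmost in lexicographic order, together with how many of its lower-order siblings have subtrees that die out before time $T$. So first I would state and prove the following elementary lemma: if, given a random number $L$ of siblings with $\WS(L=\ell)=p_\ell$, we attach to sibling $m$ (for $m=1,\dots,L$) an independent copy $\widetilde N_m$ of an $\N_0$-valued variable $\widetilde N$ describing the number of its time-$T$ descendants, and to each child we give an independent $\mathrm{Unif}[0,1]$ mark, then the probability that (i) $L=\ell$, (ii) the overall leftmost time-$T$ descendant sits in some sibling and (iii) exactly $k$ siblings ordered before that one have no time-$T$ descendants, equals $p_\ell\,\WS(\widetilde N=0)^{k}\,\WS(\widetilde N_1>0)$ — because, by exchangeability of the $\ell$ i.i.d.\ labelled siblings, we may condition on the leftmost nonempty sibling being sibling $k+1$, which forces siblings $1,\dots,k$ to be empty (probability $\WS(\widetilde N=0)^k$ each) and sibling $k+1$ nonempty (probability $\WS(\widetilde N_1>0)$), and the remaining $\ell-k-1$ siblings are unconstrained. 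Crucially there is no surviving factor of the form $\EW[s^{\widetilde N}]$: once we know which sibling is leftmost-and-nonempty, the mark \emph{values} inside it carry no further information, in contrast to the max-marker setup where the maximal value $s$ had to dominate all the other subtrees.

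With that lemma in hand the induction runs as before. For $j=0$ both sides equal $\WS(\tau_1\ge T)=\mu((T,\infty))$, since $N_T>0$ with $J=0$ means the root individual has not yet reproduced and has produced at least one offspring at its death — wait, more carefully, $J=0$ on $\{N_T>0\}$ simply means $T_1>T$, i.e.\ $\tau_1>T$. For $j=1$, condition on $\{T_1\in\dif t_1\}$: the root produces $L_1$ i.i.d.\ subtrees of age $T-t_1$, each of size distributed like $N_{T-t_1}$; apply the new lemma with $\widetilde N=N_{T-t_1}$ to read off the factor $p_{\ell_1}\WS(N_{T-t_1}=0)^{k_1}$ times the probability that the leftmost-nonempty subtree itself has $J=0$ on $\{N_{T-t_1}>0\}$, which by the $j=0$ case is $\WS^{T-t_1}(\tau_1\ge T-t_1)$; multiplying by $\WS(\tau_1\in\dif t_1)$ and using the renewal structure $\WS(\tau_1\in\dif t_1)\WS(\tau_1'\ge T-t_1)=\WS(\tau_1\in\dif t_1,\tau_2\ge T)$ gives the right-hand side. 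For the induction step, on $\{T_1\in\dif t_1, L_1=\ell_1, K_1=k_1\}$ the descendants of the leftmost-nonempty sibling form, conditionally, a Bellman-Harris tree on $[t_1,T]$ in which the leftmost surviving lineage is exactly the continuation of the leftmost surviving lineage of the whole tree; setting $t_i':=t_{i+1}-t_1$, the left side of \eqref{fullformula2} factors as $\WS(\tau_1\in\dif t_1)\,p_{\ell_1}\WS(N_{T-t_1}=0)^{k_1}$ times $\WS^{T-t_1}\big(N_{T-t_1}>0,J=j-1,T_i\in\dif t_i',L_i=\ell_{i+1},K_i=k_{i+1},\,i=1,\dots,j-1\big)$, and the induction hypothesis rewrites the last factor, after which the renewal identity $\WS(\tau_1\in\dif t_1)\WS(\tau_1'\in\dif t_1',\dots,\tau_{j-1}'\in\dif t_{j-1}',\tau_j'\ge T-t_1)=\WS(\tau_1\in\dif t_1,\dots,\tau_j\in\dif t_j,\tau_{j+1}\ge T)$ assembles the claimed right-hand side.

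I would also record one subtlety to check: the leftmost time-$T$ descendant of the whole tree, restricted to the subtree rooted at the leftmost-nonempty first-generation sibling, really is the leftmost time-$T$ descendant of \emph{that} subtree — this is immediate from the lexicographic order being first compared at the first reproduction event — and the siblings within that subtree that come before $V$ and die out, counted at later reproduction events, are precisely $K_2,\dots,K_j$; the only ones attached to the \emph{first} event and dying out before $T$ are the $k_1$ siblings ordered before the chosen one, which is consistent with the constraint $k_i\in\{1,\dots,\ell_i-1\}$ stated in the theorem (the chosen sibling occupies one of the $\ell_i$ slots, and at least that one must be nonempty, so at most $\ell_i-1$ empties can precede it; note the constraint should presumably read $k_i\in\{0,\dots,\ell_i-1\}$, which I would flag). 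The main obstacle is simply getting the bookkeeping of the planar/lexicographic order to line up with the recursion — i.e.\ verifying cleanly that conditioning on $K_1=k_1$ is independent of the internal structure of the surviving subtree and contributes exactly the clean factor $\WS(N_{T-t_1}=0)^{k_1}$ — but once the elementary lemma above is isolated, the rest is the same two-line renewal induction as for \Cref{th:1-general}.
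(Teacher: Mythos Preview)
Your proposal is correct and follows exactly the paper's approach: the paper isolates the same elementary lemma (\Cref{LemmaGeiger}, giving $\WS(\widetilde N=0)^{k}\,\WS(\widetilde N>0)$ via exchangeability after ordering the marks $U_k$) and then simply says the induction runs in analogy to the proof of \Cref{th:1-general}. Your flag that the constraint should read $k_i\in\{0,\dots,\ell_i-1\}$ is also correct, as the paper's own Figure~\ref{fig:3} displays $K_1=K_3=0$.
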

\begin{proof}
The proof of the theorem works in analogy to the one of \Cref{th:1-general}, but using following analogue of \Cref{Lemma1}.
\end{proof}
\begin{lemma}\label{LemmaGeiger}
     Let $\widetilde N$ be an $\N_0$-valued random variable, and  $\widetilde N_1, \widetilde N_2, \ldots $ be i.i.d.~copies of  $\tilde N$. Given $\widetilde N_1, \widetilde N_2, \ldots $ let $U_{1},  U_{2}, \ldots$ be independent Unif\,$[0,1]$-distributed random variables, and write
    \begin{eqnarray*}
        S^{(\ell)} &:=& \min\left\{U_{k} \mid\widetilde N_k \ge 1, k=1,\ldots,\ell \right\}, \\
        K^{(\ell)} &:=& \left|\left\{U_k \mid U_k < S^{(\ell)}, k=1,\ldots,\ell \right\}\right|
    \end{eqnarray*}
    where we put $\min(\emptyset) := {+}\infty$.
     Then, for all $k < \ell \in \N$ we have
    \begin{equation}\label{gen2}
    \WS\left(\widetilde N_1+\ldots+\widetilde N_\ell > 0, \, K^{(\ell)} = k \right)
    = \WS\left(\widetilde N=0\right)^{k} \WS\left(\widetilde N > 0\right).
    \end{equation}
 \end{lemma}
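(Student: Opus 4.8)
The plan is to prove \eqref{gen2} by the same exchangeability argument used for \Cref{Lemma1}, now adapted to minima rather than maxima. The key observation is that the quantity $K^{(\ell)}$ counts exactly those indices $k \in \{1,\ldots,\ell\}$ with $\widetilde N_k = 0$ but whose associated uniform $U_k$ is smaller than the minimum $S^{(\ell)}$ of the uniforms attached to the \emph{nonzero} $\widetilde N_k$. Equivalently, order the $\ell$ pairs $(\widetilde N_k, U_k)$ by increasing value of $U_k$; then $K^{(\ell)} = k$ precisely when the first $k$ pairs in this ordering all have $\widetilde N = 0$ and the $(k+1)$-st has $\widetilde N > 0$. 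Since the pairs $(\widetilde N_1,U_1),\ldots,(\widetilde N_\ell,U_\ell)$ are i.i.d.\ and the $U_k$ are a.s.\ distinct, conditioning on the ranks of the $U_k$ makes the associated $\widetilde N$-values an i.i.d.\ sample in a uniformly random order; hence the event $\{K^{(\ell)}=k\}$ has the same probability as ``the first $k$ draws in an i.i.d.\ sequence are $0$ and the $(k+1)$-st is positive,'' which is $\WS(\widetilde N = 0)^k\,\WS(\widetilde N > 0)$ — noting $k+1 \le \ell$ so there really is a $(k+1)$-st pair.

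Concretely, I would argue as follows. By exchangeability of the $\ell$ pairs,
\[
\WS\!\left(\widetilde N_1+\ldots+\widetilde N_\ell > 0,\, K^{(\ell)}=k\right)
= \binom{\ell}{k}\,\WS\!\left(A_k\right),
\]
where $A_k$ is the event that $\widetilde N_1=\cdots=\widetilde N_k=0$, that $\widetilde N_{k+1}>0$, and that $U_{k+1} = \min\{U_{k+1},\ldots,U_\ell\}$ and $\max\{U_1,\ldots,U_k\} < U_{k+1}$ — i.e.\ that the first $k$ (zero-valued) uniforms and the smallest of the remaining uniforms are arranged so that indices $1,\ldots,k$ give exactly the uniforms below $S^{(\ell)}$. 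On $A_k$ one automatically has $\widetilde N_1+\cdots+\widetilde N_\ell>0$ (because $\widetilde N_{k+1}>0$), so that constraint is redundant. Now the $\widetilde N$-values and the $U$-values are independent, so $\WS(A_k)$ factors as $\WS(\widetilde N=0)^k\,\WS(\widetilde N>0)$ times the purely combinatorial probability that, among $\ell$ i.i.d.\ uniforms, the minimum falls on coordinate $k+1$ and coordinates $1,\ldots,k$ are all smaller than coordinate $k+1$; the remaining coordinates $k+2,\ldots,\ell$ are unconstrained. That combinatorial probability is $\frac{k!\,(\ell-k-1)!}{\ell!} = 1/\binom{\ell}{k}\cdot\frac{1}{\,\ell-k\,}\cdot(\ell-k) $; more carefully, it equals the chance that in a uniformly random ordering of $\ell$ items the block $\{1,\ldots,k\}$ occupies the $k$ smallest positions and item $k+1$ the $(k+1)$-st smallest, which is $k!\,(\ell-k-1)!/\ell! = 1\big/\big(\binom{\ell}{k}(\ell-k)\big)$. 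Hmm — this does not immediately cancel, so let me restate it in the cleaner form below.

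The cleanest route, which I would actually write up, avoids binomials entirely: condition on the ranks. Let $\sigma$ be the (a.s.\ well-defined) permutation sorting $U_1,\ldots,U_\ell$ in increasing order, so $U_{\sigma(1)} < \cdots < U_{\sigma(\ell)}$. Because the $(\widetilde N_k, U_k)$ are i.i.d.\ and $\widetilde N$ is independent of the $U$'s, the sequence $(\widetilde N_{\sigma(1)},\ldots,\widetilde N_{\sigma(\ell)})$ is again an i.i.d.\ sample from the law of $\widetilde N$. By construction, $S^{(\ell)}$ is the smallest $U$-value among indices with $\widetilde N>0$, hence $S^{(\ell)} = U_{\sigma(m)}$ where $m = \min\{i : \widetilde N_{\sigma(i)} > 0\}$, and $K^{(\ell)} = m-1$, the number of zero-valued $\widetilde N$'s preceding the first positive one in the sorted order. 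Therefore
\[
\WS\!\left(\widetilde N_1+\ldots+\widetilde N_\ell>0,\,K^{(\ell)}=k\right)
= \WS\!\left(\widetilde N_{\sigma(1)}=\cdots=\widetilde N_{\sigma(k)}=0,\ \widetilde N_{\sigma(k+1)}>0\right)
= \WS(\widetilde N=0)^k\,\WS(\widetilde N>0),
\]
where the event $\{\widetilde N_1+\cdots+\widetilde N_\ell>0\}$ is implied by $\widetilde N_{\sigma(k+1)}>0$, and the last equality uses the i.i.d.\ structure of the sorted sequence together with $k+1\le\ell$. This proves \eqref{gen2}.

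I do not expect a genuine obstacle here; the only point requiring a little care is the claim that sorting by the independent uniforms leaves the $\widetilde N$-sequence i.i.d.\ — this is the standard fact that attaching i.i.d.\ uniform keys and reordering produces a uniformly random permutation independent of the data, and it is exactly the analogue of the exchangeability step in the proof of \Cref{Lemma1}. One should also record the boundary bookkeeping ($\min(\emptyset)=+\infty$ handles the all-zero case, which is excluded on the event in question, and the hypothesis $k<\ell$ guarantees the existence of the index $\sigma(k+1)$), after which the identity \eqref{gen2} follows immediately. Plugging this lemma into the induction of \Cref{th:1-general} — with the factor $\WS(\widetilde N=0)^{k_i}\WS(\widetilde N>0)$ now playing the role that $\ell\,\EW[s^{\widetilde N}]^{\ell-1}$ played there, and $\widetilde N = N_{T-t_i}$ — yields \Cref{th:2-general} verbatim as in Section~\ref{proof2}.
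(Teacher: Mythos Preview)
Your ``cleanest route'' is correct and is exactly the paper's argument: the paper simply says that by exchangeability one may assume $U_1<\cdots<U_\ell$, whereupon $\{K^{(\ell)}=k\}$ becomes $\{\widetilde N_1=\cdots=\widetilde N_k=0,\ \widetilde N_{k+1}>0\}$ --- your sorting permutation $\sigma$ makes the same reduction explicit. (Your first, binomial-based attempt miscounts the symmetry classes --- the correct multiplicity is $\binom{\ell}{k}(\ell-k)$, not $\binom{\ell}{k}$, since one must also choose which of the remaining $\ell-k$ indices realises $S^{(\ell)}$ --- but you rightly abandon it.)
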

 \begin{proof}
     Because $S^{(\ell)}$ and $K^{(\ell)}$ do not depend on the order of $U_1, \dots, U_\ell$, we can use
     exchangeability to assume that $U_1<U_2<\dots<U_\ell$. 
     For $K^{(\ell)}$ to be $k$, $S^{(\ell)}$ has then to be $U_{k+1}$. This is exactly the case if $\widetilde N_1, \dots, \widetilde N_k = 0$ and $\widetilde N_{k+1}>0$.
 \end{proof}
 \section{Biological perspectives}\label{biopersp}
 Cheek and Johnston \cite[Section 5] {gworg} discuss recent studies (\cite{park}, \cite{coorens}) which suggest that certain mutation rates  are elevated for the earliest cell divisions in embryogenesis.  Under the assumptions that (1) cell division times vary and (2) mutations arise not only {\em at} but also {\em between} cell divisions,  Cheek and Johnston argue that this early rate elevation might be parsimoniously explained by their finding that in the supercritical case with no deaths the rate of branching events along a uniformly chosen ancestral lineage is increasing in $t \in [0,T]$ (which is a corollary to their Theorem 2.4). 
 
 The two-stage sampling rule
 \begin{itemize}
     \item first sample a random tree (``an adult'') that survives up to time $T$,
     \item then sample an individual  from this tree (``a cell from this adult'') at time $T$
 \end{itemize}seems adequate for the situation discussed in Cheek and Johnston \cite[Section 5]{gworg}. In other modeling  situations, again with a large collection of i.i.d.\ Galton-Watson trees, one may think of a different sampling rule: Choose individuals at time $T$ uniformly from the union of all time $T$ individuals in all of the trees. This makes it more probable that the sampled individuals belong to larger trees, and in fact corresponds to the size-biasing of the random trees at time $T$ (\cite[Section 4]{geba}).
 In the two-stage sampling rule we see the different rate bias \eqref{ratebias}, discussed at the end of \Cref{sec:mainresults}.
  
 As can be seen from \cite[Theorem~1]{CW} (and \Cref{th:CW}), the rate bias \eqref{ratebias} is also absent along the ancestral lineage of an individual whose marker has a prescribed value $s$,  if one considers a situation in which a neutral marker evolves along the trees in small (continuous) mutation steps, and if one takes, for the prescribed  value $s$, the collection of trees so large that one individual at time $T$ has a marker value close to  (ideally: precisely at) $s$.
 
 The sampling rule  that appears in \cite{geiger} (and \Cref{th:2-general}) leads to a rate (and reproduction size) bias along the ancestral lineage that is different from the ones we just discussed. This sampling rule can be defined via i.i.d. real-valued valued neutral markers that are created at each birth and passed to the offspring. The individual sampled at time $T$ (from the tree conditioned to survive up to time $T$) is the one whose marker sequence is the largest in lexicographic order among the individuals that live in the tree at time $T$. This interpretation appears of less biological relevance, except in the pure birth (or cell division) case, where one might think of one single marker that is passed on in each generation to a randomly chosen daughter cell.

\FloatBarrier 

\bibliography{gw} 
\bibliographystyle{habbrv}
\end{document}